\newcommand{\GL}{\mathrm{GL}}
\newcommand{\M}{\mathrm{M}}
\newcommand{\F}{\mathscr{F}}
\newcommand{\J}{\mathscr{J}}
\newcommand{\C}{\mathbb C}
\newcommand{\und}[1]{\underline{#1}}
\newcommand{\diag}{\mathrm{diag}}
\newcommand{\D}{\mathrm{D}}
\newtheorem{lemma}{Lemma}[section]
\newtheorem{proposition}[lemma]{Proposition}
\newtheorem{corollary}[lemma]{Corollary}
\theoremstyle{definition} 
\newtheorem{definition}[lemma]{Definition}
\newtheorem{example}[lemma]{Example}
\newtheorem{remark}[lemma]{Remark}
\title[Dynamics of Word Maps and Polynomial Maps]{Dynamics of Word Maps on Groups and Polynomial Maps on Algebras}
\date{\today}
\author[Saikat Panja]{Saikat Panja\orcidlink{0000-0002-9639-3122}}
\email{panjasaikat300@gmail.com}
\address{Indian Statistical Institute, Bengaluru Centre, 8th Mile, Mysore Rd, RVCE Post, Gnana Bharathi, Bengaluru, Karnataka 560059, India}
\thanks{Panja is supported by an NBHM postdoctoral fellowship, file number ending at R\&D-II/6746.}
\date{\today}
\subjclass[2020]{20G40, 20P05, 16R10, 16S50,37P99,37F10}
\keywords{word maps, polynomial maps, Complex Lie groups, Complex Algebras, Dynamics, Fatou Sets, Julia Sets}
\begin{document}
\begin{abstract}
We introduce the notions of Fatou and Julia sets in the context of word maps on complex Lie groups and polynomial maps on finite-dimensional associative $\mathbb C$-algebras.
For the group-theoretic question, we investigate the dynamics of the power map $x \mapsto x^{M}$ on the Lie group $\mathrm{GL}_n(\mathbb C)$, where $M \geq 2$ is an integer.
For the algebra-related question, we study polynomial self-maps of $\mathrm{M}_n(\mathbb C)$ induced by monic polynomials in one variable. 
In both cases, we pin down the explicit description of the Fatou and Julia sets.
We also show that there does not exist any wandering Fatou component of the pair $(p,\M_n(\C))$ where $p\in\C[z]$ is a monic polynomial of degree $\geq 2$.
\end{abstract}
\maketitle
\section{Introduction}\label{sec:introin}
\subsection{Complex dynamics} The modern study of discrete complex dynamical systems dates back to the early twentieth century, beginning with the seminal work of P.\ Fatou and G.\ Julia on the iteration of rational maps $\varphi(z)\in\mathbb{C}(z)$ on the Riemann sphere; see, for instance, \cite{GastonJulia1918,Fatou1926}. 
A natural point of departure is the analysis of fixed points—those $z\in\widehat{\C}$ satisfying $\varphi(z)=z$. These are classified by the value of $\varphi'(z)$ as attracting, repelling, or neutral, reflecting whether nearby points approach or recede from the fixed point under iteration; periodic and preperiodic points are also studied in the same spirit.

More subtle features of the dynamics appear in the Fatou and Julia sets. The Fatou set consists of points where the iterates of $\varphi$ vary stably, in the sense that small perturbations remain small under iteration. Its complement, the Julia set, is the locus of chaotic behaviour, where arbitrarily small errors can be amplified indefinitely.
Towards the end of the twentieth century, this theory was extended to several complex variables in foundational work by Bedford--Smillie, Forn\ae ss--Sibony, and others; see \cite{BedfordSmillie1991a,BedfordSmillie1991b,ForSib1994a,ForSib1995}. 
\subsection{Word maps on groups and polynomial maps on algebras}
Let $\mathbf{C}$ be the category of groups (or of $k$-algebras for some field $k$). 
Let $w \in \mathbf{F}_r$, where $\mathbf{F}_r \in \mathbf{C}$ is the free group (resp.\ free algebra) on $r$ generators. 
For any object $X \in \mathbf{C}$, one obtains a map
\begin{align*}
    \widetilde{w} : X^{r} \longrightarrow X,
\end{align*}
defined by substitution. 
When $X$ is a group (resp.\ an algebra), the map $\widetilde{w}$ is called a \emph{word map} (resp.\ a \emph{polynomial map}). 
By abuse of notation, we denote the map$\widetilde{w}$ by $w$ itself.

The study of word maps dates back at least to 1951, to work of {\O}ystein Ore, who proved that every element of the alternating group $A_n$ is a commutator and conjectured that the same holds for all finite non-abelian simple groups; see \cite{Ore1951}. 
This conjecture was resolved in full generality through the efforts of many mathematicians; see \cite{LiebeckObrienShalevTiep2010} and the references therein. 
In a related direction, a striking result was later established: for every finite non-abelian simple group $G$ of sufficiently large order, and every nontrivial word $w$, one has $w(G)^2 = G$; see \cite{LarsenShalevTiep2011}.
However $2$ can not be reduced further; for example squaring map on $A_5$ is not surjective.
In this article we study the \emph{power map} $x \mapsto x^{M}$ for an integer $M \ge 2$. 
Such maps have been investigated extensively: by Chatterjee and Steinberg, independently, in the context of algebraic groups \cite{Chatterjee02,Steinberg03}; for some finite groups of Lie type in \cite{KunduSingh2024, PanjaSingh2025}, for general linear groups over finite principal ideal local rings of length two in \cite{PanjaRoySingh2025}; see also \cite{Panja2025roots} and the references therein. 
Power maps exhibit a number of interesting features (see \cite{Panja2024c}), and they also provide a useful tool for studying the number of real conjugacy classes \cite{panja2024d}.

On the other hand, the study of polynomial maps has a long history. 
For example, Shoda proved in 1937 that over a field of characteristic zero, every trace-zero matrix is a commutator; see \cite{Shoda1937}. 
Kaplansky and L\'vov later conjectured that the image of a multilinear polynomial on $\mathrm{M}_n(k)$, the full matrix algebra over an infinite field $k$, is always a vector space. 
Several positive results supporting this conjecture are known--see, for instance, \cite{KanelMalevRowen2012,KanelMalevRowen2016}--but the conjecture remains wide open in the cases $n \geq 4$. 
Br\v esar in 2020 proved that \cite{Bresar2020} if $C$ is a commutative unital algebra over a field $\mathbb F$ of characteristic $0$, $A=\M_n(C)$, and the polynomial $f$ is neither an identity nor a central polynomial of $A$, then every commutator in $A$ can be written as a difference of two elements, each of which is a sum of $7788$ elements from $f(A)$; for a stronger result see \cite{BresarVolcic2025}. 
Polynomial maps have also been investigated in other algebraic settings: for instance, upper triangular matrix algebras were studied in \cite{PanjaPrasad2023}, and Octonion algebras in \cite{PanjaSainiSingh2025}; see also \cite{PanjaSainiSinghConst} for a more general formulation of this problem.
\subsection{Motivation and the statement of the result}
Motivated by the study of several complex variables, we aim to study Fatou and Julia sets in the context of word maps on complex Lie groups and polynomial maps on $\C$-algebras.
Since a word map is a map $w: X^r\longrightarrow X$, it does not make sense to iterate; that is why we need to work with a tuple of maps, see \Cref{sec:prelim}. 
However, when one considers the free object on one generator $\mathbf F_1\in\mathbf C$, then a relevant $w\in \mathbf F_1$ induces a map $w: X\longrightarrow X$. 
With this in mind, we investigate the dynamics for the power map on the group $\GL_n(\C)$ and the polynomial map in one variable on the algebra $\M_n(\C)$. Our main result is the following:
\begin{restatable}{theorem}{theoremone}\label{thm:FJ-polynomial-map}
Let $p\in\C[x]$ be a monic polynomial of degree $\geq 2$ and let
$X\in\M_n(\C)$. Then $X\in\F_p(\M_n(\C))$ if and only if $\sigma(X)\subseteq \F_p(\C)$, where $\sigma(X)$ denotes the spectrum of $X$. Furthermore $\F_{x^M}(\GL_n(\C))=\F_{x^M}(\M_n(\C))\cap\GL_n(\C)$.
\end{restatable}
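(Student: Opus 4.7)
\emph{Plan.} Since conjugation $Y\mapsto QYQ^{-1}$ is a biholomorphism of $\M_n(\C)$ satisfying $p^k(QYQ^{-1})=Qp^k(Y)Q^{-1}$, both the hypothesis and the conclusion are conjugation-invariant in $X$, so I would first move $X$ into Jordan canonical form. The overall strategy is to reduce matrix-valued dynamics to scalar dynamics via two classical tools: the block formula $p^k(\lambda I+N)=\sum_{j=0}^{m-1}\frac{(p^k)^{(j)}(\lambda)}{j!}N^j$ for a size-$m$ Jordan block $\lambda I+N$, and the Riesz--Dunford functional calculus $p^k(Y)=\frac{1}{2\pi i}\oint_\Gamma p^k(z)(zI-Y)^{-1}\,dz$, valid whenever $\sigma(Y)$ lies strictly inside the contour $\Gamma$.

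\emph{Necessity.} To show that $X\in\F_p(\M_n(\C))$ forces $\sigma(X)\subseteq\F_p(\C)$, I would fix $\lambda\in\sigma(X)$ and restrict $\{p^k\}$ to the one-parameter holomorphic family $\epsilon\mapsto X+\epsilon I$ on a small disk about $0$; normality near $X$ descends to normality of this restricted family near $\epsilon=0$. Because $X$ is in Jordan form, any diagonal position $i$ that lies in a block of $\lambda$ satisfies $(p^k(X+\epsilon I))_{ii}=p^k(\lambda+\epsilon)$. Projecting the matrix-valued normal family onto this entry gives a normal family $\{p^k(\lambda+\epsilon)\}_k$ of scalar holomorphic functions of $\epsilon$ near $0$, which is exactly the condition $\lambda\in\F_p(\C)$.

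\emph{Sufficiency.} Conversely, assume $\sigma(X)\subseteq\F_p(\C)$. Openness of $\F_p(\C)$ lets me choose disjoint closed disks $\overline{D(\lambda_i,r)}\subset\F_p(\C)$ around the distinct eigenvalues $\lambda_i$ of $X$; set $\Gamma=\bigsqcup_i\partial D(\lambda_i,r)$. For $Y$ in a sufficiently small neighborhood $V$ of $X$ in $\M_n(\C)$, upper semicontinuity of the spectrum keeps $\sigma(Y)$ inside $\bigsqcup_i D(\lambda_i,r)$, so the Dunford formula represents $p^k(Y)$ as a contour integral over $\Gamma$. Given any subsequence, normality of $\{p^k\}$ on the compact set $\Gamma\subset\F_p(\C)$ produces a further subsequence converging in the spherical metric on each component of $\Gamma$; joint continuity of the resolvent $(z,Y)\mapsto(zI-Y)^{-1}$ on $\Gamma\times V$, together with a decomposition of $p^{k_i}(Y)$ by the Riesz projections $\frac{1}{2\pi i}\oint_{\partial D(\lambda_i,r)}(zI-Y)^{-1}\,dz$, then transfers this into locally uniform spherical convergence of $p^{k_i}(Y)$ on $V$, giving $X\in\F_p(\M_n(\C))$.

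\emph{The $\GL_n$ statement and the main obstacle.} Since $\GL_n(\C)$ is analytically open in $\M_n(\C)$ and the power map $x\mapsto x^M$ is defined on all of $\M_n(\C)$, normality of $\{x^{M^k}\}$ at a point $X\in\GL_n(\C)$ can be tested equivalently in a $\GL_n$-neighborhood or an $\M_n$-neighborhood of $X$, which yields $\F_{x^M}(\GL_n(\C))=\F_{x^M}(\M_n(\C))\cap\GL_n(\C)$ immediately. The genuinely delicate point is the \emph{mixed case} in the sufficiency argument: when $\sigma(X)$ meets several different Fatou components of $p$, a single subsequence of iterates can have some eigenvalues converging to finite limits while others escape to $\infty$. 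The Riesz-projection block decomposition is exactly the tool that decouples these contributions, so that the mixed spherical limits on the different $\partial D(\lambda_i,r)$ still assemble into a bona fide spherical limit of the matrix-valued iterates on $V$.
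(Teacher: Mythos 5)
Your sufficiency argument and the $\GL_n$ statement follow essentially the same route as the paper: a contour $\Gamma$ of small circles around the eigenvalues inside $\F_p(\C)$, uniform resolvent bounds on a neighborhood of $X$, and the Riesz--Dunford representation $p^k(Y)=\frac{1}{2\pi i}\oint_\Gamma p^k(z)(zI-Y)^{-1}\,dz$. The paper passes from local uniform boundedness to normality via the Aladro--Krantz rescaling criterion rather than your direct subsequence extraction, but that is a cosmetic difference, and your explicit Riesz-projection treatment of the mixed case is if anything more careful than the paper's (which first assumes no eigenvalue lies in the basin of infinity and handles that case separately at the end).

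The necessity direction, however, has a genuine gap at the step ``projecting the matrix-valued normal family onto this entry gives a normal family $\{p^k(\lambda+\varepsilon)\}_k$.'' Normality here --- in the sense the paper uses via the Aladro--Krantz theorem, for holomorphic maps into the complete Hermitian manifold $\M_n(\C)\cong\C^{n^2}$ --- allows subsequences that are compactly divergent, and compact divergence of a matrix-valued sequence only says that the \emph{norm} leaves every compact set uniformly; a fixed diagonal entry of such a sequence can behave arbitrarily. Concretely, take $p(z)=z^2$ and $X=\diag(3,1)$: every $Y$ in a small ball around $X$ satisfies $\|p^k(Y)\|\ge\rho(Y)^{2^k}\ge(2.9)^{2^k}$, so $\{p^k\}$ is compactly divergent, hence normal in that sense, on a neighborhood of $X$, while $1\in\J_{z^2}(\C)$ and the $(2,2)$-entry family $\{(1+\varepsilon)^{2^k}\}_k$ is \emph{not} normal at $\varepsilon=0$. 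Thus the implication ``matrix family normal $\Rightarrow$ entry family normal'' on which your argument rests is false in general, and it fails exactly in the mixed-spectrum situation you flag as delicate for sufficiency but do not confront for necessity. To close the gap you must exclude both alternatives separately for $\lambda\in\J_p(\C)\cap\sigma(X)$: that no subsequence of $p^{k}(X+\varepsilon I)$ converges locally uniformly (this can be done with a repelling periodic point $\beta$ near $\lambda$, since $|(p^{k_j})'(\beta)|\to\infty$), and that no subsequence is compactly divergent near $X$ (which requires producing matrices arbitrarily close to $X$ whose iterates stay bounded along the given subsequence, and is precisely where examples such as $\diag(3,1)$ create real difficulty and put pressure on the intended definition of normality). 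The paper's own converse is also terse here, but as written your projection step is not a valid inference.
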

This can be seen as a generalisation of results obtained in \cite{Pal2019}.
After providing necessary definitions and discussing some examples in \Cref{sec:prelim}, we prove the main theorem in \Cref{sec:ring-poly}.

\section{Preliminary definitions and known results}\label{sec:prelim}
\begin{definition}
    Let $G$ be a complex Lie group and $w\in \F_r$ be an element from the free group on $r$ generators, say $x_1,x_2,\ldots,x_r$. 
    This defines a \emph{word map on $G$},
    $(g_1,g_2,\cdots,g_r)\mapsto w(g_1,g_2,\cdots,g_r)$, by plugging $g_i$ in place of $x_i$.
    Given $r$ words $w_1,\cdots,w_r$ one gets a map $\underline{w}:G^r\longrightarrow G^r$ by defining: $$\underline{g}=(g_1,g_2,\cdots,g_r)\mapsto({w_1(\underline{g}),\cdots,w_r(\underline{g})}).$$
    The \emph{Fatou set of the pair $(G,\underline{w}=(w_1,\ldots,w_r))$} consists of the set of points $x\in G^r$ which have a neighborhood $U$ such that $(\underline{w}^n|_U)_n$ is normal. 
    Here $\underline{w}^1(x)=\underline{w}(x)$, $\underline{w}^2(x)=\underline{w}(\underline{w}(x))$ and more generally $\underline{w}^n(x)=\underline{w}(\underline{w}^{n-1}(x))$.
    
    With necessary modification, for a topological $\C$-algebra $A$ and $f_1,\cdots,f_r\in\C\langle x_1,x_2,\ldots,x_r\rangle$ the \emph{Fatou set of the pair $(A,\underline{f}=(f_1,\cdots,f_r))$} consists of the set of points $x\in A^r$ which have a neighborhood $U$ such that $(\underline{f}^n|_U)_n$ is normal.
    
    The \emph{Julia set} of the pair is defined to be the set-theoretic complement of the Fatou set.
\end{definition}
The Julia set for the pair $(G,w)$, of a group and a word, is denoted by $\J_w(G)$, and the Fatou set is denoted by $\F_w(G)$; similar notation is followed for the pair $(A,f)$, of an algebra and a polynomial.
Our definitions are motivated by the theory of several complex variables.
In this article, we compute the Fatou set (consequently Julia set) for the pairs $(\GL_n(\mathbb C),x^M)$ and $(\M_n(\mathbb C), x^M+\sum\limits_{i=0}^{M-1}a_ix^i)$, where $\GL_n(\C)$ is considered to be a topological group and $\M_n(\C)$ is considered to be a topological algebra; here $a_i\in \C$ for all $i=1,2,\ldots, M-1$.

In the spirit of the theory of a single complex variable, we define the \emph{filled Julia set} for the pair $(G,\und{w})$ to be 
\begin{align*}
    K_{\und{w}}(G)=\left\{x\in G^r:\left(\und{w}^n(x)\right)_{n\geq 0}\text{ is a bounded sequence}\right\}.
\end{align*}
For a polynomial function (of one variable) $p$ on $\C$, one has the Julia set $J_p(\C)=\partial K_p(\C)$; see \cite[Lemma 9.4]{Milnor2006book}.
However, this need not continue to hold in general, as we see in later sections.
So one needs to be careful while dealing with the case of several variables.
\begin{definition}
    Let $G$ be a complex Lie group (resp. $A$ be a $\C$-algebra). Given a word map (resp. polynomial map) $\chi$, the associated Fatou set is the disjoint union of its connected components, known as \emph{Fatou components}.
    The Fatou components are permuted by $\chi$.
    For a Fatou component $U\subseteq G^r$ (resp. $\subseteq A^r$), it is (a) \emph{periodic} if $\chi^p(U)=U$ for some $p>0$, (b) \emph{preperiodic} if $\chi^k(U)$ is periodic, and (c) \emph{wandering Fatou component} if the sets $\left\{\chi^k(U)\right\}_{k\geq 0}$ are pairwise disjoint.
\end{definition}
Let us now see some examples. 
\begin{example}
Consider the group $\mathrm{GL}_{2}(\mathbb{C})$ with $r=2$.  
Take the words $w_{1}=x_{2}$ and $w_{2}=x_{1}^{2}x_{2}\in \mathbf{F}_{2}$, and the pair of matrices
\begin{align*}
(A,B)=\left(
\begin{pmatrix}1 & 1\\ 0 & 1\end{pmatrix},
\begin{pmatrix}1 & 0\\ 1 & 1\end{pmatrix}
\right).
\end{align*}
Then the induced map is
\begin{align*}
W(X,Y)=(Y,\, X^{2}Y).
\end{align*}
For instance,
\begin{align*}
W^{3}(A,B)=
\left(
\begin{pmatrix}3 & 2\\ 7 & 5\end{pmatrix},
\begin{pmatrix}89 & 62\\ 33 & 23\end{pmatrix}
\right).
\end{align*}
Moreover,
\begin{align*}
W^{6}(A,B) &=\left(
\begin{pmatrix}
69210849 & 48219134\\
25665025 & 17880799
\end{pmatrix},
\begin{pmatrix}
1557268252466751 & 1084947340259330\\
3792477575677951 & 2642215592726081
\end{pmatrix}\right).
\end{align*}
\end{example}

\begin{example}
Consider the algebra $\mathrm{M}_{2}(\mathbb{C})$ with $r=2$.  
Let $w_{1}=x_{2}$ and $w_{2}=x_{1}^{2}+x_{1}+x_{2}\in \mathbf{F}_{2}$, and take
\begin{align*}
(A,B)=\left(
\begin{pmatrix}1 & 0\\ 1 & 1\end{pmatrix},
\begin{pmatrix}1 & 1\\ 0 & 1\end{pmatrix}
\right).
\end{align*}
Then
\begin{align*}
W^{3}(A,B)=
\left(
\begin{pmatrix}5 & 4\\ 3 & 5\end{pmatrix},
\begin{pmatrix}20 & 11\\ 24 & 20\end{pmatrix}
\right),\\
W^{5}(A,B)=
\left(
\begin{pmatrix}62 & 55\\ 57 & 62\end{pmatrix},
\begin{pmatrix}746 & 506\\ 1041 & 746\end{pmatrix}
\right).
\end{align*}
\end{example}

For later use, we tally the Sullivan classification of Fatou Components on the Riemann sphere.
\begin{lemma}\cite[Theorem 16.1]{Milnor2006book}\label{lem:suli-fatou-comp}
Let $f$ be a rational function on the Riemann sphere. 
If $f$ maps the Fatou component $U$ onto itself,
then there are just four possibilities, as follows: Either $U$ is the
immediate basin for an attracting fixed point or for one petal of a parabolic fixed point which has multiplier $\lambda=1$ or else $U$ is a Siegel disk or Herman ring.
\end{lemma}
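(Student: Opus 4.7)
The plan is to invoke uniformization theory together with the Denjoy--Wolff theorem to classify the dynamics of $f$ on a fixed Fatou component $U$. Since $\deg f\ge 2$, the Julia set is a nonempty infinite compact set and $U\subsetneq\widehat{\C}$; by uniformization $U$ is a hyperbolic Riemann surface carrying a Poincar\'e metric $\rho_U$. By the Schwarz--Pick lemma the holomorphic self-map $f|_U\colon U\to U$ is either a strict contraction of $\rho_U$ or a conformal automorphism of $U$. I would treat these two cases separately.

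\textbf{Contracting case.} If $U$ is simply connected, lift $f|_U$ to a self-map $F\colon\mathbb D\to\mathbb D$ of the universal cover and apply the Denjoy--Wolff theorem: there is $p\in\overline{\mathbb D}$ with $F^n\to p$ locally uniformly. If $p\in\mathbb D$, its projection to $U$ is an attracting fixed point of $f$, so $U$ is its immediate basin (type (1)). If $p\in\partial\mathbb D$, then since $f$ extends rationally to $\widehat{\C}$ the limit descends to a fixed point $z_0\in\partial U$ with $|f'(z_0)|\le 1$; the Julia--Wolff boundary Schwarz lemma, combined with $z_0$ lying in the Julia set, pins the multiplier to $f'(z_0)=1$, giving a parabolic petal (type (2)). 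For $U$ multiply connected a strict contraction is incompatible with $f(U)=U$, since the hyperbolic diameter of a fundamental annulus would strictly decrease under iteration while being preserved by the equality $f(U)=U$.

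\textbf{Automorphism case.} Suppose $f|_U$ is a conformal automorphism. If $U$ is simply connected, a Riemann map conjugates $f|_U$ to a M\"obius automorphism of $\mathbb D$. Parabolic and hyperbolic M\"obius maps push every orbit to the boundary, which is incompatible with $f(U)=U$ and normality of $\{f^n\}$ on $U$; hence $f|_U$ is elliptic, conjugate to $z\mapsto e^{2\pi i\alpha}z$. Irrational $\alpha$ yields a Siegel disk (type (3)); rational $\alpha$ forces $f^m|_U=\mathrm{id}_U$ and hence $f^m=\mathrm{id}$ on $\widehat{\C}$ by unique analytic continuation, contradicting $\deg f\ge 2$. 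If $U$ is multiply connected, applying the Riemann--Hurwitz formula to the proper self-map $f|_U$ together with the automorphism property forces $\chi(U)=0$: $U$ is conformally an annulus. The holomorphic automorphisms of an annulus are rotations of its core circle, and irrationality of the rotation angle (again enforced by $\deg f\ge 2$) realises $U$ as a Herman ring (type (4)).

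\textbf{Main obstacle.} The hardest step is converting the Denjoy--Wolff boundary limit in the contracting simply connected subcase into a genuine fixed point of $f$ on $\partial U$ with multiplier exactly $1$. Here the rationality of $f$ is essential: it guarantees a continuous extension of $f$ across $\partial U\subset\widehat{\C}$, ruling out the Baker-domain phenomenon that gives a fifth class for transcendental self-maps. The multiply connected analysis is analogously delicate: one must use Riemann--Hurwitz together with the fact that $f|_U$ has degree $\deg f$ to exclude surfaces of positive genus or with three or more ends, and invoke Herman's construction of analytic circle diffeomorphisms with Diophantine rotation numbers to ensure non-vacuity of type (4).
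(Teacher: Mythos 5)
The paper offers no proof of this lemma at all: it is quoted verbatim as \cite[Theorem 16.1]{Milnor2006book} and used as a black box, so the only meaningful comparison is between your sketch and the standard argument (Milnor, \S 16). Your overall strategy — hyperbolicity of $U$, the Schwarz--Pick dichotomy between strict contraction and conformal automorphism, Denjoy--Wolff plus a boundary (snail-lemma) argument in the contracting case, and rotation domains in the automorphism case — is exactly the classical route.

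However, there is a genuine error in your case analysis. You dispose of the ``contracting and multiply connected'' case by asserting that a strict contraction of the Poincar\'e metric is incompatible with $f(U)=U$ because ``the hyperbolic diameter of a fundamental annulus would strictly decrease under iteration while being preserved by the equality $f(U)=U$.'' This is false: $f(U)=U$ only says $f|_U$ is a proper self-map (of degree possibly $>1$), not an isometry, so nothing is ``preserved.'' Concretely, take any polynomial with totally disconnected Julia set (e.g.\ $z\mapsto z^2-6$): the basin of infinity is a single invariant Fatou component, it is infinitely connected, and $f$ strictly contracts its hyperbolic metric since $\infty$ is a superattracting interior fixed point. Immediate attracting and parabolic basins of rational maps are very often multiply connected. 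As written, your argument would wrongly force every multiply connected invariant component to be a Herman ring. The fix is to not split on connectivity in the contracting case: one shows directly (as in Milnor's Lemma 16.2 and the surrounding discussion) that either some subsequence of $\{f^n|_U\}$ converges to a non-constant limit — in which case $f|_U$ is a local hyperbolic isometry and $U$ is a Siegel disk or Herman ring — or all limits are constants lying in $\partial U$; the constant is then a fixed point $z_0$ of $f$, and the snail lemma pins its multiplier to $\lambda=1$ (the case $|\lambda|<1$ giving the attracting basin, which may well be multiply connected). Relatedly, your appeal to the Julia--Wolff angular derivative does not by itself identify the multiplier of $f$ at $z_0$ as a rational map, and your Riemann--Hurwitz computation $\chi(U)=0$ is meaningless for infinitely connected $U$; both points are handled by the snail lemma and by the classification of hyperbolic surfaces admitting infinite-order automorphisms with recurrent orbits, respectively.
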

Further by \cite[Lemma 9.4]{Milnor2006book}, all the bounded components of the Fatou set of a polynomial of degree $\geq 2$ are simply connected. 
Thus while analyzing the elements of the Fatou components of the polynomial $f$, one gets exactly three possibilities; Herman ring does not occur (as it is conformally isomorphic to some annulus).
To check whether the family $\{w^m\}_{m\geq0}$ is normal or not, we use the criterion developed by Gerado and Krantz which applies to complex manifold. 
For future reference we note it down here.
\begin{lemma}\cite[Theorem 3.1]{GeradoKrantz1991}\label{lem:normal-GK}
Let $\mathscr{M}\subseteq\C^n$ be a complex manifold. Let $N$ be a complete
complex Hermitian manifold of dimension $k$. 
Let $\mathscr{F}=\left\{f_\alpha\right\}_{\alpha\in A}\subseteq \mathrm{Hol}(\mathscr{M},N)$.
The family $\mathscr{F}$ is not normal if and only if there exist (a) a compact set $K\subseteq\subseteq \mathscr{M}$, (b) a sequence $\{p_j\}\subseteq K$, (c) a sequence $\{f_j\}\subseteq \mathscr{F}$, (d) a sequence of reals $\{\rho_j\}$ satisfying $\rho_j>0$ and $\rho_j\longrightarrow 0^+$ and (e) a sequence of Euclidean vectors $\{\varepsilon_j\}\subseteq \C^n$ such that
\begin{align*}
g_j(\zeta)=f_j(p_j+\rho_j\varepsilon_j\zeta),\,\,\,\zeta\in \C,
\end{align*}
converges uniformly on compact subsets of $\C$ to a \emph{nonconstant} entire function $g$.
\end{lemma}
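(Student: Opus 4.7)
The plan is to prove a Zalcman-type rescaling lemma in which the target is an arbitrary complete Hermitian manifold rather than $\widehat{\C}$. The two implications differ greatly in depth.

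\emph{Easy direction: rescaling implies non-normal.} Assume $g_j(\zeta)=f_j(p_j+\rho_j\varepsilon_j\zeta)$ converges locally uniformly on $\C$ to a non-constant entire map $g:\C\to N$, and suppose, for contradiction, that $\mathscr{F}$ is normal. After passing to subsequences one has $p_j\to p_\infty\in K$ and either (a) $f_j$ converges locally uniformly to some $f:\mathscr{M}\to N$, or (b) $f_j$ diverges compactly (the second option being available because $N$ need not be compact). In case (a), $\rho_j\to 0^+$ gives $p_j+\rho_j\varepsilon_j\zeta\to p_\infty$ uniformly on compact subsets of $\C$, so $g_j(\zeta)\to f(p_\infty)$, which is a constant, contradicting non-constancy of $g$. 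In case (b), the pointwise limit $g_j(0)\to g(0)\in N$ prevents the images from eventually leaving every compact set, again a contradiction.

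\emph{Hard direction: non-normal produces such a rescaling.} Pass to a holomorphic chart so that $\mathscr{M}$ is a domain in $\C^n$. Introduce the Marty-type spherical derivative
$$f^\#(z)=\sup_{v\in\C^n,\,|v|=1}\|df_z(v)\|_h,$$
where $\|\cdot\|_h$ is the Hermitian norm on $TN$. I would first establish a manifold Marty criterion: $\mathscr{F}$ is normal on $\mathscr{M}$ if and only if $\{f^\#:f\in\mathscr{F}\}$ is locally uniformly bounded. The substantive implication uses completeness of $N$: local bounds on $f^\#$ make $\mathscr{F}|_K$ equi-Lipschitz on every compact $K\subset\mathscr{M}$; for any sequence that does not diverge compactly, the images of $K$ stay inside a distance-bounded subset of $N$, pre-compact by Hopf--Rinow applied to the underlying Riemannian structure, and then Arzel\`a--Ascoli extracts a locally uniform limit.

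With the criterion in hand, choose $z_k\to z_0$ and $f_k\in\mathscr{F}$ with $f_k^\#(z_k)\to\infty$ and execute Zalcman's maximization on $\overline{B(z_0,r)}\subset\mathscr{M}$: let
$$M_k=\max_{z\in\overline{B(z_0,r)}}(r-|z-z_0|)\,f_k^\#(z),$$
let $w_k$ attain it, pick a unit vector $\varepsilon_k\in\C^n$ with $\|df_k|_{w_k}(\varepsilon_k)\|_h=f_k^\#(w_k)$, and put $\rho_k=1/f_k^\#(w_k)$. Then $M_k\to\infty$ and $\rho_k\to 0^+$. Set $g_k(\zeta)=f_k(w_k+\rho_k\varepsilon_k\zeta)$ on $|\zeta|<R_k:=M_k$. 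Maximality of $M_k$ yields the uniform estimate
$$\|g_k'(\zeta)\|_h\le\rho_k\,f_k^\#(w_k+\rho_k\varepsilon_k\zeta)\le\frac{R_k}{R_k-|\zeta|},$$
so $\{g_k\}$ is equi-Lipschitz on every compact disk, while by construction $\|g_k'(0)\|_h=\rho_k\,f_k^\#(w_k)=1$. Equi-Lipschitzness together with completeness of $N$ delivers, via Arzel\`a--Ascoli, a subsequence converging locally uniformly to a holomorphic $g:\C\to N$, and the identity $\|g'(0)\|_h=1$ forces $g$ to be non-constant.

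The main obstacle is precisely the manifold Marty criterion together with its accompanying normality-versus-compact-divergence dichotomy for maps into a non-compact target: it needs the Hermitian distance estimate, Hopf--Rinow to upgrade distance-boundedness to pre-compactness, and a careful arrangement of the base points $g_k(0)=f_k(w_k)$ so that they stay in a pre-compact region of $N$ (secured because non-normality of $\mathscr{F}$ lets us pick a sequence $f_k$ that does not diverge compactly). Once this groundwork is in place, the classical Zalcman rescaling transfers essentially verbatim, with $\|\cdot\|_h$ playing the role of the spherical metric on $\widehat{\C}$.
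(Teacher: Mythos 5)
First, note that the paper does not prove this statement at all: it is quoted verbatim from Aladro--Krantz \cite[Theorem 3.1]{GeradoKrantz1991} and used as a black box, so there is no internal proof to compare yours against. Your reconstruction follows exactly the route of the cited source --- a Marty-type criterion for holomorphic maps into a complete Hermitian manifold followed by Zalcman's renormalization --- and both the easy direction and the maximization step are set up correctly.

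There is, however, one genuine gap in the hard direction, and it is precisely the point you flag but do not actually close: the precompactness in $N$ of the base points $g_k(0)=f_k(w_k)$. The equi-Lipschitz bound on the rescaled maps together with Hopf--Rinow yields a convergent subsequence via Arzel\`a--Ascoli only if the images of a single point stay in a distance-bounded subset of $N$; otherwise the $g_k$ may diverge compactly and no non-constant entire limit is produced. Your justification --- that non-normality lets you choose $f_k$ that do not diverge compactly --- does not deliver this: ``not compactly divergent'' only provides points $q_k$ in some compact subset of $\mathscr{M}$ with $f_k(q_k)$ in a fixed compact subset of $N$, and since $f_k^{\#}$ blows up near $z_0$ there is no Lipschitz control transferring that boundedness to the Zalcman points $w_k$. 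This is exactly the delicate spot in the original Aladro--Krantz argument for non-compact targets, which later literature corrects or restricts. For the application in this paper the issue is harmless, since the target is $\M_n(\C)\cong\C^{n^2}$ and the relevant families are shown to be locally uniformly bounded, so all images stay in compact sets; but as a proof of the lemma in the stated generality this step needs an additional argument (or an added hypothesis such as compactness of $N$).
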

\section{Polynomial maps on matrix rings and power maps on general linear group}\label{sec:ring-poly}
We start with the following lemma, which reduces the problem of candidacy of $A$ in $\J_p(\M_n(\C))$ (equivalently $\F_p(\M_n(\C))$) to that of the Jordan canonical form of $A$.
\begin{lemma}\label{lem:red-jord}
    Let $p\in \C[z]$. For a matrix $X\in \M_n(\C)$, it is an element of the Julia set $\J_p(\M_n(\C))$ if and only if for any $Y\in\GL_n(\C)$, the matrix $YXY^{-1}$ is in the Julia set $\J_p(\M_n(\C))$.
\end{lemma}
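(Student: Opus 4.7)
The plan is to exploit the fact that conjugation by an invertible matrix is a $\C$-linear automorphism of $\M_n(\C)$ that intertwines with the polynomial $p$. First I would introduce, for each $Y\in\GL_n(\C)$, the conjugation map $\varphi_Y:\M_n(\C)\longrightarrow\M_n(\C)$ defined by $\varphi_Y(Z)=YZY^{-1}$. Since $\varphi_Y$ is $\C$-linear and invertible with inverse $\varphi_{Y^{-1}}$, it is in particular a biholomorphism of $\M_n(\C)$.

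The next step is the intertwining identity. Because $p\in\C[z]$ is an ordinary one-variable polynomial and $\varphi_Y$ preserves addition, multiplication, and the scalar $1$, one has $\varphi_Y(p(Z))=p(\varphi_Y(Z))$ for every $Z\in\M_n(\C)$, i.e.\ $\varphi_Y\circ p = p\circ \varphi_Y$. A trivial induction on $n$ then gives $p^n\circ\varphi_Y=\varphi_Y\circ p^n$ for all $n\geq 0$.

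With this in hand I would translate normality from $X$ to $YXY^{-1}$. If $X\in\F_p(\M_n(\C))$, pick a neighborhood $U$ of $X$ such that $\{p^n|_U\}_{n\geq 0}$ is normal. Then $V=\varphi_Y(U)$ is an open neighborhood of $YXY^{-1}$, and on $V$ one has
\begin{align*}
p^n|_V \;=\; \varphi_Y\circ (p^n|_U)\circ \varphi_Y^{-1}|_V.
\end{align*}
Since $\varphi_Y$ and $\varphi_Y^{-1}$ are biholomorphisms (in particular, continuous and holomorphic in both directions), composing a normal family on $U$ with these fixed biholomorphisms yields a normal family on $V$. Hence $YXY^{-1}\in\F_p(\M_n(\C))$. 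The reverse implication follows by replacing $Y$ with $Y^{-1}$, so the Fatou set, and therefore its complement the Julia set, is invariant under conjugation.

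I do not foresee a serious obstacle here: the only subtle point is justifying that normality is preserved under pre- and post-composition with biholomorphisms, which is immediate from the definition of normality together with the fact that $\varphi_Y$ and its inverse carry compact sets to compact sets and uniform convergence on compacta to uniform convergence on compacta.
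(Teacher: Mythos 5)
Your proposal is correct and follows essentially the same route as the paper: establish the intertwining identity $p\circ\varphi_Y=\varphi_Y\circ p$ (hence $p^m\circ\varphi_Y=\varphi_Y\circ p^m$) and then transport normality through the conjugation map, which the paper simply describes as a homeomorphism of $\M_n(\C)$. You merely spell out in more detail the step the paper leaves implicit, namely that pre- and post-composition with a fixed (bi)holomorphic homeomorphism preserves normal families.
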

\begin{proof}
    For any matrix $Y\in\GL_n(\C)$ and $p(z)=z^n+\sum\limits_{i=1}^{n}a_{n-i}z^{n-i}\in\C[z]$,
    \begin{align*}
        p(Y^{-1}XY)&=(Y^{-1}XY)^n+\sum\limits_{i=1}^na_{n-i}(Y^{-1}XY)^{n-i}\\
        &=Y^{-1}X^nY+\sum\limits_{i=1}^na_{n-i}Y^{-1}X^{n-i}Y=Y^{-1}XY.
    \end{align*}
    Since conjugation by $Y$ is a homeomorphism of the algebra $\M_n(\C)$, the result follows.
\end{proof}
Consider the map $w(x)=x^M$ for the polynomial algebra $\M_n(\C)$.
Let $g\in \M_n(\C)$ be regular semisimple; hence there exists a $Q\in\M_n(\C)$ such that $QgQ^{-1}=\diag(\alpha_1,\cdots,\alpha_n)$ with $\alpha_i\neq \alpha_j$ for all $i\neq j$.
Then $w^m(g)=g^{M^m}$; which is equal to $Q^{-1}\diag(\alpha_1^{M^m},\cdots,\alpha_n^{M^m})Q$.
Recall that the spectral radius of $g$ is by definition $\rho(g)=\max\limits_{1\leq i\leq n}|\alpha_i|$.
Let $g\in\J_w(\M_n(\C))$ such that $\rho(g)<1$, we get that $w^n(g)\longrightarrow 0$ as $n\longrightarrow\infty$.
Since $g$ is regular semisimple, and the set of regular semisimple elements forms an open set of $\M_n(\C)$, $w$ is a holomorphic map of full rank.
Hence, by \cite[Proposition 2.4]{FornaessSibony1998}, the Julia set $\J_w$ satisfies $w(\J_w) \subseteq \J_w$, which would imply that $0 \in \J_w$.

Let us calculate the Jacobian of $w$ at $g$. 
Note that up to conjugation $g = \operatorname{diag}(\alpha_1, \dots, \alpha_n)$.
Then for the standard matrix units $E_{ij}$, we have
$$
d(w^m)_g(E_{ij})
 = \left( \sum_{k=0}^{M^m-1} \alpha_i^{\,k} \alpha_j^{M^m-1-k} \right) E_{ij}.
$$
Hence each $E_{ij}$ is an eigenvector of $d(w^m)_g$, with corresponding eigenvalue
$$
\mu_{ij} =
\begin{cases}
\dfrac{\alpha_i^{M^m} - \alpha_j^{M^m}}{\alpha_i - \alpha_j}, & i \ne j, \\
M^m \alpha_i^{M^m-1}, & i = j.
\end{cases}
$$
Moreover, since $|\alpha_i|<1$, as $n\longrightarrow\infty$, the Jacobian tends to $0$, which implies that the zero matrix is an attracting point corresponding to the pair $(\M_n(\C),x^M)$.
This contradicts the assertion that $0 \in \J_w$.
Therefore, for any regular semisimple element $g$ with $\rho(g) < 1$, we have $g \not\in \J_w$.

Next, if $\rho(g) > 1$, the sequence ${w^m}$ diverges uniformly to infinity in a small neighborhood of $g$, since this neighborhood consists entirely of regular semisimple elements whose eigenvalues remain close to those of $g$. 
We note the whole discussion in the following remark.
\begin{remark}\label{rem:reg-sem}
For the pair $(\M_n(\C), x^M)$, the inclusion of a regular semisimple element $g \in \M_n(\C)$ in the Julia set implies that $\rho(g) = 1$.
\end{remark}

Let us point out that it is not true that for a matrix $A\in \M_n(\C)$ the sequence $\{p^m(A)\}$ is bounded if and only if the eigenvalues of $A$ are in $K_p(\C)$. 
Consider the power map $p(z)=z^2$ and the matrix $\begin{pmatrix}
    1& 1 & 0\\
    & 1 &1\\
    &&1
\end{pmatrix}\in \M_3(\C)$.
Obviously $1\in K_p(\C)$, but 
\begin{align*}
    p^m(A)=\begin{pmatrix}
        1 & 2^m & 2^{m-1}\\
        & 1 & 2^m\\
        &&1
    \end{pmatrix}
\end{align*}
implies that the sequence $\{p^m(A)\}_{m\geq 0}$ is unbounded.

Now we determine some elements $g\in\M_n(\C)$ such that $g\in K_{p}(\M_n(\C))$, where $p\in\C[x]$ is a monic polynomial. 
Recall the Jordan–Chevalley decomposition of a linear operator $g = g_s + g_u \in \M_n(\C)$, where $g_s$ is semisimple and $g_u$ is nilpotent. (In the literature, the nilpotent part is usually denoted by $g_n$; however, to avoid confusion with the dimension parameter $n$, we adopt the notation $g_u$.)
We have the following result;
\begin{proposition}\label{prop:filled-Julia}
    Let $\deg p\geq 2$ and $g\in\M_n(\C)$. The following statements hold.
    \begin{enumerate}
        \item if $g$ is semisimple and the eigenvalues of $g$ are in $K_p(\C)$, then $g\in K_p(\M_n(\C))$,
        \item if $g_u\neq 0$, and the eigenvalues of $g_s$ are in $\mathrm{Int}K_p(\C)$ (the interior of $K_p(\C)$), then $g\in K_p(\M_n(\C))$.
    \end{enumerate}
\end{proposition}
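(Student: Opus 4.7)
The strategy is to reduce both statements to a single computation on Jordan blocks via conjugation invariance. The same argument as in \Cref{lem:red-jord} (which only uses that conjugation is a bi-continuous algebra automorphism of $\M_n(\C)$) shows that $g\in K_p(\M_n(\C))$ if and only if $YgY^{-1}\in K_p(\M_n(\C))$ for every $Y\in\GL_n(\C)$. I may therefore replace $g$ by its Jordan canonical form, so that $p^m(g)$ is block-diagonal and it suffices to prove that $\{p^m(J_s(\alpha))\}_{m\ge 0}$ is bounded for each Jordan block $J_s(\alpha)=\alpha I_s+N$ appearing in $g$, where $N$ denotes the nilpotent upper-shift. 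The central identity is the holomorphic functional calculus
\begin{align*}
q(\alpha I_s+N)=\sum_{k=0}^{s-1}\frac{q^{(k)}(\alpha)}{k!}N^k,
\end{align*}
valid for any polynomial $q$. Applied with $q=p^m$, the matrix $p^m(J_s(\alpha))$ is an upper-triangular Toeplitz matrix whose operator norm is controlled (up to a factor depending only on $s$) by $\max_{0\le k\le s-1}|(p^m)^{(k)}(\alpha)|/k!$.

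For part (1), every Jordan block has size $s=1$, so only the $k=0$ coefficient survives; boundedness of $\{p^m(J_1(\alpha_i))\}=\{p^m(\alpha_i)\}$ is then exactly the hypothesis $\alpha_i\in K_p(\C)$, and the result follows.

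For part (2), the extra derivative terms $(p^m)^{(k)}(\alpha)$ with $k\ge 1$ must also be controlled. Here I would use that, since $\deg p\ge 2$, the filled Julia set $K_p(\C)$ is bounded: a standard escape-radius argument gives $|p(z)|\ge 2|z|$ for $|z|$ sufficiently large, so iterates escape outside some disk. Since each eigenvalue $\alpha$ of $g_s$ lies in $\mathrm{Int}\, K_p(\C)$, there exists $r>0$ with $\overline{D(\alpha,r)}\subseteq \mathrm{Int}\, K_p(\C)$, and consequently every iterate $p^m$ maps $D(\alpha,r)$ into the bounded set $K_p(\C)$. Setting $M=\sup_{z\in K_p(\C)}|z|<\infty$, one has $|p^m(z)|\le M$ uniformly on $D(\alpha,r)$ for all $m$, and Cauchy's estimates then give $|(p^m)^{(k)}(\alpha)|\le k!\,M/r^k$ uniformly in $m$ for every $k\ge 0$. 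Plugging into the functional-calculus formula bounds $\{p^m(J_s(\alpha))\}$ uniformly, as required.

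The proof is routine once the right pieces are assembled; the only conceptual point—and the reason the hypothesis in (2) must be strengthened from $K_p(\C)$ to its interior when $g_u\ne 0$—is that Cauchy's estimates need a genuine open disk on which iterates stay bounded, not merely the orbit of a single point. Near the boundary of $K_p(\C)$, where normality typically fails, one expects $(p^m)^{(k)}(\alpha)$ to blow up for $k\ge 1$, so bounded iterates of $\alpha$ alone cannot control the off-diagonal entries of $p^m(J_s(\alpha))$. I do not anticipate any serious obstacle; the main verification is the uniform-boundedness claim on $D(\alpha,r)$, and that is immediate from compactness of $K_p(\C)$.
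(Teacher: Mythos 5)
Your proof is correct, and for part (2) it takes a genuinely different — and simpler — route than the paper. The paper also reduces to a single Jordan block and to bounding the derivatives $(p^m)^{(k)}(\alpha)$, but it then invokes the Sullivan classification (\Cref{lem:suli-fatou-comp}) to identify the bounded Fatou component containing $\alpha$ as an attracting basin, a parabolic petal, or a Siegel disk, and treats each case with a separate argument (contraction plus Cauchy estimates, uniform convergence to the parabolic point plus Cauchy estimates, and the rotation conjugacy $p^m=\phi\circ R^m\circ\phi^{-1}$, respectively). You bypass the classification entirely: since $\alpha\in\mathrm{Int}\,K_p(\C)$ you get a closed disk $\overline{D(\alpha,r)}\subseteq K_p(\C)$, and the forward invariance of $K_p(\C)$ together with its boundedness (the escape-radius bound, which needs $\deg p\ge 2$) gives the uniform bound $\sup_{m}\sup_{D(\alpha,r)}|p^m|\le M$ in one line, after which a single application of Cauchy's estimates controls all the $(p^m)^{(k)}(\alpha)$ at once. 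This is more elementary (no Sullivan classification, no Siegel-disk conjugacy) and uniform across all component types; what the paper's case analysis buys instead is explicit dynamical information about the orbits in each type of component, which is in the spirit of the rest of the article but is not needed for the boundedness statement. Two cosmetic points: your Toeplitz formula correctly carries the $1/k!$ factors that the paper's displayed matrix for $(p^m)(J_{\alpha,n})$ omits; and your closing remark about why the hypothesis must be $\mathrm{Int}\,K_p(\C)$ rather than $K_p(\C)$ matches the paper's explicit counterexample with $p(z)=z^2$ and a Jordan block at $\alpha=1$.
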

\begin{proof}
    (1) Let $g$ be semisimple. 
    Then there exists $Q\in \GL_n(\C)$ such that 
    $Q^{-1}gQ=\diag(\alpha_1,\cdots,\alpha_n)$ for some $\alpha_i\in\C$ for all $1\leq i\leq n$.
    Then 
    \begin{align*}
        p^m(g)=Q\begin{pmatrix}
            p^m(\alpha_1) & & &\\
            & p^m(\alpha_2) & &\\
            & & \ddots &\\
            &&&p^m(\alpha_n)
        \end{pmatrix}Q^{-1}.
    \end{align*}
    Since conjugation by an element of $\GL_n(\C)$ is a homeomorphism of $\M_n(\C)$, the sequence $\left\{p^m(g)\right\}_{m\geq0}$ is bounded if each of the sequence $\left\{p^m(\alpha_i)\right\}_{m\geq 0}$ for all $1\leq i\leq n$. 

(2) Since $\C$ is algebraically closed, there exists $Q\in\GL_n(\C)$ such that 
$Q^{-1}gQ=\bigoplus\limits_{i=1}^{\ell}J_{\alpha_i,m_i}$,
where $J_{\alpha_i,m_i}\in\M_{m_i}(\C)$ is the matrix
\begin{align*}
    \begin{pmatrix}
        \alpha_i & 1 &  & && \\
        &\alpha_i & 1 &   && \\
        &&\alpha_i  & && \\
         &  &  &\ddots && \\
         &&&&\alpha_i&1\\
        &  &  & && \alpha_i\\
    \end{pmatrix};
\end{align*}
here, $\alpha_i$s can be the same for two distinct $i$.
Whether this element belongs to $K_p(\M_n(\C))$ is equivalent to check whether the elements $J_{\alpha_i,m_i}$ are member of $K_p(\M_{m_i}(\C))$.
Without loss of generality, thus we may assume that $g$ is of the form $J_{\alpha,n}$. By induction, we have
\begin{align*}
    (p^m)(J_{\alpha,n})=\begin{pmatrix}
        (p^m)^{}(\alpha) & (p^m)^{(1)}(\alpha) & \cdots & (p^m)^{(n-1)}(\alpha)\\
        &(p^m)^{}(\alpha)  & \cdots & (p^m)^{(n-2)}(\alpha)\\
        &&\ddots&(p^m)^{(1)}(\alpha)\\
        &&&(p^m)^{}(\alpha)
    \end{pmatrix}.
\end{align*}
Given $\alpha\in \mathrm{Int} K_p(\C)$, it must belong to one bounded component of the Julia set which can be of three types, see \Cref{lem:suli-fatou-comp}. 
We study each case separately, starting with the case when $\alpha$ lies in an attracting basin.

\textbf{Case 1:}
Suppose $U$ is the basin of an attracting periodic cycle of period $N$, and let
$z\in U$. Then the sequence $\{p^m(z)\}$ is bounded by definition and we want to show that for every $k\geq 1$
the sequence $\{(p^m)^{(k)}(z)\}$ is bounded.

It is enough to prove the statement for the $p^N$ and the derivatives $(p^N)^{(m)}$, where $p^N(\alpha)=\alpha$; here $\alpha$ is an attracting fixed point of the map $q:=p^N$, $|q'(\alpha)|<1$.
By continuity of $q'$ at $\alpha$ we can choose $r>0$ and $0<\rho<1$ such that
\begin{align*}
\overline{D(\alpha,r)} \subset U \text{ and } \sup_{w\in D(\alpha,r)} |q'(w)| \le \rho <1.    
\end{align*}
which implies
\begin{align*}
|q(w)-q(\alpha)| = |q(w)-\alpha| \leq \rho\,|w-\alpha| \leq \rho r,    
\end{align*}
since $\overline{D(\alpha,r)}$ is compact.
Hence $q(D(\alpha,r))\subset D(\alpha,\rho r)\subset D(\alpha,r)$; on iteration one gets,
$q^m(D(\alpha,r))\subset D(\alpha,r)$ for all $m\geq0.$
Fix any $0<r_0<r$ and any point $y\in D(\alpha,r_0)$. Because $q^m$ maps
$\overline{D(\alpha,r)}$ into itself for every $m$, one gets the uniform bound
$
\sup\limits_{w\in D(\alpha,r)} |q^m(w)| \le M := |\alpha| + r,
$
independent of $m$.
Applying the Cauchy integral formula on the disk $D(\alpha,r)$: for any $k\geq 1$ and any
$y\in D(\alpha,r_0)$ with $r_0<r$,
\begin{align*}
(q^m)^{(k)}(y) = \frac{k!}{2\pi i}\int\limits_{|\zeta-\alpha|=r}\frac{q^m(\zeta)}{(\zeta-y)^{k+1}}\,d\zeta. 
\end{align*}
Using $|\zeta-y|\ge r-r_0$ on the circle, we get that
\begin{align*}
\big| (q^m)^{(k)}(y)\big|
\leq \frac{k!}{(r-r_0)^k}\,\sup_{|\zeta-\alpha|=r}|q^m(\zeta)|
\leq \frac{k!\,M}{(r-r_0)^k},    
\end{align*}
which is independent of $m$. Thus for every fixed $k$ and every $y\in D(\alpha,r_0)$
the sequence $\{(q^m)^{(k)}(y)\}_{m\ge0}$ is uniformly bounded.
For arbitrary $\ell$, we may find $r$ and $\ell'$ such that $\ell= Nr+\ell'$ with $0\leq \ell'<N$. Then one observes that $p^\ell=p^{Nr}\circ p^{\ell'}$ and applies the chain rule.
\textbf{Case 2:} Let $\alpha\in U$ where $U$ is an immediate attracting petal for a parabolic
periodic point of $p$. 
As in the previous case, we may assume that $\alpha$ is a fixed point, keeping the notation $q=p^N$.
The iterates $q^m$ converge uniformly on compact subsets of the
immediate petal $U$ to the parabolic fixed point $\alpha$.
Fix the given $z\in U$ and choose
$r>0$ with $\overline{D(z,r)}\subseteq U$. Uniform convergence on the compact set
$\overline{D(z,r)}$ implies the family $\{q^m\}_{m\ge0}$ is uniformly bounded there, and hence there
exists $M>0$ such that
\begin{align*}
\sup_{\substack{m\geq0\\ \zeta\in\overline{D(z,r)}}} |q^m(\zeta)| \le M.
\end{align*}
Now, for every $k\geq 1$ and every
$m\geq 0$,
\begin{align*}
(q^m)^{(k)}(z)
&= \frac{k!}{2\pi i}\int\limits_{|\zeta-z|=r}\frac{q^m(\zeta)}{(\zeta-z)^{k+1}}\,d\zeta,
\end{align*}
and therefore,
\begin{align*}
\big|(q^m)^{(k)}(z)\big|
&\le \frac{k!}{r^k}\,\sup_{|\zeta-z|=r}|q^m(\zeta)|
\le \frac{k!\,M}{r^k},
\end{align*}
which is independent of $m$. Thus for each fixed $k$ the sequence $\{(q^m)^{(k)}(z)\}$ is
uniformly bounded. 



\textbf{Case 3:}
Lastly let $\alpha\in U$ such that $U$ is a Siegel disk for $p$, i.e.\ there
exists a conformal map $\phi:\D_r\to U$ from some disk $\D_r=\{w\in\C:|w|<r\}$ and an irrational $\theta$ such that
\begin{align*}
\phi(e^{2\pi i\theta}w)=p(\phi(w))\qquad\text{for }w\in\D_R.
\end{align*}
By the conjugacy relation $\phi\circ R=\;p\circ\phi$ with $R(w)=e^{2\pi i\theta}w$ we have
for every $m\ge0$
\begin{align*}
p^m=\phi\circ R^m\circ\phi^{-1}.
\end{align*}
Set $w_m:=R^m(w_0)=e^{2\pi i m\theta}w_0$. Since $|w_m|=|w_0|<r$ for all $m$, the points
$\{w_m\}$ lie in a compact subset of $\D_r$, and hence the maps $\phi$ and its inverse $\phi^{-1}$ have all their derivatives to be uniformly bounded on the relevant compact sets.

For the first derivative,
\begin{align*}
(p^m)'(z)
&= \phi'(w_m)\cdot (R^m)'(w_0)\cdot (\phi^{-1})'(z)
= \phi'(w_m)\,e^{2\pi i m\theta}\,(\phi^{-1})'(z).
\end{align*}
Hence
\begin{align*}
\big|(p^m)'(z)\big| = |\phi'(w_m)|\;|(\phi^{-1})'(z)|,
\end{align*}
and the right-hand side is uniformly bounded in $m$ because $\{\phi'(w_m)\}_m$ is bounded
(on the compact set containing the $w_m$) while $(\phi^{-1})'(z)$ is a fixed finite number.

For higher derivatives, firstly note that $R^m(z)=e^{2\pi m i}z$, and hence all the higher ($\geq 2$) derivatives vanish. 
Further, for each fixed $k$, there exist
constants $C_{j,\ell}$ (depending only on $k$) such that
\begin{align*}
(p^m)^{(k)}(z)=\sum_{1\leq j\leq k,\;0\leq\ell\leq k} C_{j,\ell}\; \phi^{(j)}(w_m)\;(\phi^{-1})^{(\ell)}(z),
\end{align*}
by the chain rule.
Because $w_m$ remains in a compact subset of $\D_r$, the values $\phi^{(j)}(w_m)$ are
uniformly bounded in $m$ for every fixed $j$, and the finitely many derivatives
$(\phi^{-1})^{(\ell)}(z)$ are fixed numbers. Therefore each $(p^m)^{(k)}(z)$ is uniformly
bounded in $m$.
\end{proof}
Clearly $\overline{K_p}=\left\{g\in \M_n(\C):\text{all eigenvalues of $g$ are in $K_p(\C)$}\right\}$.
Note that if at least one eigenvalue of a matrix $g\in \M_n(\C)$ lies in $\C\setminus K_p(\C)$, then the sequence $\left\{p^n(g)\right\}$ diverges uniformly in a neighbourhood $U$ of $g$. 
Thus $\J_p(\M_n(\C))\subseteq \overline{K_p(\M_n(\C))}$.
Before proving the final theorem, we state the following lemma, which constructs an open set around a given matrix, inside $\M_n(\C)$, which will be needed in the proof.
\begin{lemma}\label{lem:nbd-mat}
Let $g\in\M_n(\C)$, and $\alpha_1,\cdots,\alpha_n$ be eigenvalues of $g$ (counted with multiplicity). Let $U$ be an open set of $\C$, containing $\alpha_i$ for all $i$. Then the set
\begin{align*}
U(g)=\left\{h\in \M_n(\C):\sigma(x)\subseteq U\right\}
\end{align*}
is an open set.
\end{lemma}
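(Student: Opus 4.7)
The plan is first to observe that the set $U(g)$ as written depends only on $U$, not on $g$, so the statement reduces to the classical fact that for any open $U\subseteq \C$ the set $\mathcal O_U:=\{h\in\M_n(\C):\sigma(h)\subseteq U\}$ is open. This is the upper semi-continuity of the spectrum for matrices, and I would prove it directly via Rouch\'e's theorem applied to the characteristic polynomial of $h$.

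Concretely, I would fix $h_0\in\mathcal O_U$ and enumerate its distinct eigenvalues $\beta_1,\ldots,\beta_s$ with multiplicities $m_1,\ldots,m_s$ summing to $n$. Since $U$ is open and contains all the $\beta_j$, I can pick $r>0$ small enough that the closed disks $\overline{D(\beta_j,r)}$ are pairwise disjoint and all contained in $U$. The characteristic polynomial $\chi_{h_0}(z)=\det(zI-h_0)$ has no zeros on the compact circles $C_j=\partial D(\beta_j,r)$, so $\varepsilon_j:=\min_{z\in C_j}|\chi_{h_0}(z)|>0$. Since the coefficients of $\chi_h(z)$ are polynomial, hence continuous, functions of the entries of $h$, I can find a neighborhood $V$ of $h_0$ in $\M_n(\C)$ such that $|\chi_h(z)-\chi_{h_0}(z)|<\varepsilon_j$ holds uniformly on $C_j$ for every $h\in V$ and every $j=1,\ldots,s$. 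Rouch\'e's theorem then gives that $\chi_h$ has exactly $m_j$ zeros (with multiplicity) inside each $D(\beta_j,r)$, and summing accounts for all $n$ roots. Hence $\sigma(h)\subseteq \bigcup_j D(\beta_j,r)\subseteq U$ for every $h\in V$, so $V\subseteq \mathcal O_U$, proving openness.

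The only step requiring mild care is securing the uniform estimate simultaneously on all $s$ circles $C_j$, which is automatic since there are only finitely many of them. A slicker alternative I would mention as an aside uses holomorphic functional calculus: for a cycle $\Gamma\subset U$ enclosing $\sigma(h_0)$, the Riesz projector
\begin{equation*}
P_h=\frac{1}{2\pi i}\oint_\Gamma (zI-h)^{-1}\,dz
\end{equation*}
is continuous in $h$ wherever $\Gamma$ avoids $\sigma(h)$; $P_{h_0}=I$ has trace $n$, and $\mathrm{tr}(P_h)$ is an integer-valued continuous function, hence locally constant. Thus $P_h=I$ nearby, forcing all eigenvalues of $h$ to lie inside $\Gamma\subset U$. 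Either route completes the argument without serious obstacle.
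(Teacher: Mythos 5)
Your proof is correct, and it is worth noting that the paper itself offers no argument at all for this lemma: its ``proof'' consists of the single line ``See \cite{stackexchange}.'' So you have supplied genuine content where the paper defers to an external reference. Your Rouch\'e argument is the standard proof of upper semicontinuity of the spectrum and is carried out correctly: the reduction to disjoint closed disks $\overline{D(\beta_j,r)}\subseteq U$, the positive lower bound $\varepsilon_j$ on the circles, the uniform approximation of $\chi_h$ by $\chi_{h_0}$ via continuity of the coefficients in the entries of $h$, and the counting argument (the $m_j$ sum to $n=\deg\chi_h$, so \emph{all} roots are accounted for) are all in order. You are also right that the set $U(g)$ as written depends only on $U$ and that $\sigma(x)$ in the statement should read $\sigma(h)$. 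One small caveat on your functional-calculus aside: for the conclusion ``$P_h=I$ forces $\sigma(h)$ inside $\Gamma\subset U$'' to place the eigenvalues in $U$, you need the region enclosed by $\Gamma$ to be contained in $U$, not merely $\Gamma$ itself; taking $\Gamma=\bigcup_j C_j$ with the disks from your main argument fixes this, and it does not affect the primary proof.
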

\begin{proof}
    See \cite{stackexchange}.
\end{proof}
Now we are ready to prove the main result of the article.
\theoremone*
\begin{proof}
Let $\sigma(X)\subseteq \F_p(\C)$.
We first assume that none of the eigenvalues are in the basin of infinity, and let the eigenvalues be $\lambda_1,\lambda_2,\cdots,\lambda_n$ counted with multiplicity.

Since the connected Fatou components (of $\F_p(\C)$) are open, letting $F_i$ to be the Fatou component containing $\alpha_i$ in $\F_p(\C)$, one gets that $\bigcup\limits_{i=1}^n F_i$ is an open set.
Hence by using \Cref{lem:nbd-mat} the set
\begin{align*}
\mathscr U=\left\{Y\in\M_n(\C):\sigma(Y)\subseteq \bigcup\limits_{i=1}^n F_i\right\}
\end{align*}
is open in $\M_n(\C)$.
We will first show that for all $X_0\in \mathscr U$, there exists an open set $U_0$ containing $X_0$ such that the family $\left\{p^m\right\}$ is a 
uniformly bounded family on $U_{0}$; thus proving that on $\mathscr U$, the family $\{p^m\}$ is locally uniformly bounded.
After that, we will apply \Cref{lem:normal-GK} to conclude our result.

Given $\sigma(X_0)$ is finite.
For $\lambda_i\in \sigma(X_0)$, let $F_i$ be the bounded Fatou component containing $\lambda_i$. 
Choose $C_i$ to be a small circle around $\lambda_i$, contained in $F_i$, for all $i=1,\cdots,n$.
Choose $\Gamma=\bigcup\limits_{i=1}^n C_i$, which is a compact subset of the Fatou set $\F_p(\C)$.
Since $p^k$ is normal on the Fatou components, the family $\{p^m\}$ is uniformly bounded on compact sets, and more precisely on $\Gamma$.
Hence there exists a constant $C$, such that
\begin{align*}
\sup\limits_{\substack{m\geq 0\\\gamma\in\Gamma}}|p^m(\gamma)|\leq C<\infty.
\end{align*} 
Since $\Gamma\cap\sigma(X_0)=\emptyset$, one has $(\gamma I-X_0)^{-1}$ is well defined, where $I$ is the $n\times n$ identity matrix.
Since $\Gamma$ is compact, there exists $M<\infty$ such that
\begin{align*}
M=\sup\limits_{\gamma\in\Gamma}||(\gamma I - X_0)^{-1}||.
\end{align*} 
Let $$U_0=\left\{Y\in \M_n(\C):||Y-X_0||<\dfrac{1}{2M}\right\}\cap\mathscr U,$$
which further implies that
for every $Z\in U_0$, and $\gamma\in \Gamma$,
\begin{align*}
||(\gamma I - X_0)^{-1}(Z-X_0)||\leq M||Z-X_0||<1.
\end{align*}
 This shows that $1$ is not an eigenvalue of $(\gamma I - X_0)^{-1}(Z-X_0)$; hence $I-(\gamma I - X_0)^{-1}(Z-X_0)$ is invertible.
Thus $\gamma I-Z=(\gamma I-X_0)(I-(\gamma I - X_0)^{-1}(Z-X_0))$ is invertible for all $\gamma\in\Gamma$.
Observe that for all $Z\in U$, and $\gamma\in \Gamma$ one has $||(\gamma I-Z)^{-1}||\leq\dfrac{M}{1-M\cdot (1/2M)}$, since $||(\gamma I - X_0)^{-1}(Z-X_0)||\leq M||Z-X_0||<1$; hence we have that
\begin{align*}
\sup\limits_{\substack{Z\in U_0\\\gamma\in\Gamma}}||(\gamma I- Z)||<2M.
\end{align*}
Using the Cauchy integral formula for the matrix valued functions \cite[Theorem 1.9, and page 190]{Dunford1943}, (see also \cite{Higmanbook}) for any $m$ and $Z\in U_0$,
\begin{align*}
p^m(Z)
=\dfrac{1}{2\pi i}\int\limits_{\Gamma} p^m(\zeta)(\zeta I - Z)^{-1}d\zeta.
\end{align*}
Then for all $Z\in U_0$ and $m\geq 0$ (let $\ell(\Gamma)$ denote the length of $\Gamma$),
\begin{align*}
||p^m(Z)||&=\left|\left|\dfrac{1}{2\pi i}\int\limits_{\Gamma} p^m(\zeta)(\zeta I - Z)^{-1}d\zeta\right|\right|\\
&\leq \dfrac{1}{2\pi}\ell(\Gamma)\cdot C\cdot 2M.
\end{align*}
This proves that on the open set $\mathscr{U}$ the family is locally uniformly bounded.

Now, if possible, let the family not be normal on $\mathscr U$. 
Then by \Cref{lem:normal-GK}, there exists a compact  $K\subseteq\subseteq \mathscr U$, a sequence $\{p_j\}\subseteq K$, $\{f_j=p^{m_j}\}\subseteq\mathscr{P}=\{p^m\}_{m\geq 0}$, positive reals $\rho_j\longrightarrow 0$, and vectors $\epsilon_j\in\M_n(\C)$ such that the sequence of maps $\{g_j\}$ defined as
\begin{align*}
g_j(\zeta)=f_j(p_j+\rho_j\epsilon_j\zeta),
\end{align*}
converges uniformly on compact subset of $\C$ to a non-constant entire function $g$ as $j\longrightarrow\infty$.

Since $K$ is compact and $\{p^k\}$ is locally uniformly bounded on $\mathscr U$ there exists an open set $V\subseteq \mathscr U$ such that $K\subseteq V$ with the property that
\begin{align*}
\sup\limits_{\substack{m\geq 0\\X\in V}}||p^m(X)||<C<\infty.
\end{align*}
Let $B_R=\{z\in\C:||z||\leq R\}$. Since $K\subseteq V$, we have that $d=\mathrm{dist}(K,\partial V)>0$. For each $\zeta\in B_R$ one has $||p_j+\rho_j\epsilon_j\zeta-p_j||\leq \rho_j|\epsilon_j|R$. Let $J$ be such that for all $j\geq J$ we have $\rho_j|\epsilon_j|R<d$ (this is possible as $\rho_j\longrightarrow 0$ and $|\epsilon_j|=1$.).

Then for $j\geq J$ and $\zeta\in B_R$, we have that $p_j+\rho_j\epsilon_j\zeta\in V$, whence
\begin{align*}
||g_j(\zeta)||=||p^{k_j}(p_j+\rho_j\epsilon_j\zeta)||\leq C,
\end{align*}
which shows that $g$ must be a constant function. Hence the family $\{p^m\}_{m\geq 1}$ is normal on the open set $U_0$. 
If $\sigma(X)\subseteq \F_p(\C)$ contains an element in the basin of infinity, then $||p^m(X)||$ converges to infinity uniformly. Thus we conclude that $\sigma(X)\subseteq \F_p(\C)$ implies that $X\in\F_p(\C)$.

Conversely let $\sigma(X)\not\subseteq\F_p(\C)$ and let $\alpha\in\J_p(\C)\cap\sigma(X)$. Then, in the Jordan canonical form of $X$, there exists a block of the form
\begin{align*}
    \begin{pmatrix}
        \alpha &1&&&&\\
        &\alpha&1&&&\\
        &&\ddots&\\
        &&&\alpha&1\\
        &&&&\alpha
    \end{pmatrix}.
\end{align*}
Note that for an open set $\emptyset\neq W\subseteq \C$ containing $\alpha$ such that $\{p|_W^m\}$ is not normal therein. 
Then $\{p^m\}$ is not normal on the open set $\mathscr W=\{Y\in\M_n(\C):\sigma(X)\subseteq W\}$.
Since for any open set $\mathscr W'\subseteq \M_n(\C)$, containing $X$, the intersection $\mathscr W\cap\mathscr W'$ is non-empty, the family $\{p^m\}$ is not normal on any neighborhood of $X$. This finishes the proof for the first part of the theorem.

To prove the next statement, note that we consider these maps as $w:\GL_n(\C)\longrightarrow\M_n(C)$. Thus \Cref{lem:normal-GK} is applicable. The rest of the proof is a replica of the above proof, since $\GL_n(\C)$ is open in $\M_n(\C)$.
\end{proof}
The following corollary is immediate.
\begin{corollary}\label{cor:Fatou-polynomial}
    Let $p\in\C[z]$ be a monic polynomial of degree $\geq 2$. Then
    \begin{align*}
        \J_p(\M_n(\C))&=\partial \overline{K_p(\M_n(\C))}\\
        &=\{X\in \M_n(\C):\sigma(X)\cap\J_p(\C)\neq\emptyset\}.
    \end{align*}
\end{corollary}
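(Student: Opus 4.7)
The plan is to prove the spectral characterisation in two directions and then deduce the $\GL_n(\C)$ statement as a corollary, since $\GL_n(\C)$ is open in $\M_n(\C)$ and the hypotheses of \Cref{lem:normal-GK} apply on any complex manifold.

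\textbf{Sufficiency.} Assume $\sigma(X)\subseteq\F_p(\C)$. If some eigenvalue of $X$ lies in the basin of infinity, then by continuity of the spectrum (\Cref{lem:nbd-mat}) the sequence $\|p^m(Y)\|$ tends to infinity uniformly on a neighborhood of $X$, and $X\in\F_p(\M_n(\C))$ trivially. Otherwise every eigenvalue $\alpha_i$ lies in a bounded Fatou component $F_i$. The strategy is to exhibit a neighborhood of $X$ on which $\{p^m\}_{m\ge 0}$ is uniformly bounded and then invoke a Liouville-type application of the Gerado--Krantz criterion. Concretely, I encircle each $\alpha_i$ by a small circle $C_i\subset F_i$, set $\Gamma:=\bigcup_i C_i$, and use the fact that $\Gamma$ is a compact subset of $\F_p(\C)$ to obtain a uniform bound $\sup_{m\ge 0,\,\zeta\in\Gamma}|p^m(\zeta)|\le C$. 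A Neumann series argument shows $(\zeta I-Y)^{-1}$ is uniformly bounded for $\zeta\in\Gamma$ and $Y$ in a small ball about $X$, and the Cauchy formula for the holomorphic functional calculus
\begin{align*}
p^m(Y)=\frac{1}{2\pi i}\int_{\Gamma} p^m(\zeta)(\zeta I-Y)^{-1}\,d\zeta
\end{align*}
then gives local uniform boundedness of $\{p^m\}$ near $X$. To convert this to normality I argue by contradiction: if $\{p^m\}$ were not normal at $X$, \Cref{lem:normal-GK} would produce a rescaling $g_j(\zeta)=p^{m_j}(p_j+\rho_j\varepsilon_j\zeta)$ converging locally uniformly to a nonconstant entire function $g$; but the rescaled iterates inherit uniform bounds, forcing $g$ bounded and hence constant by Liouville---a contradiction.

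\textbf{Necessity.} Conversely, suppose some $\alpha\in\sigma(X)\cap\J_p(\C)$. Writing $X=Q(J_{\alpha,k}\oplus J')Q^{-1}$ for a Jordan decomposition with an $\alpha$-block of size $k$, consider the holomorphic arc $z\mapsto Y_z:=Q(J_{z,k}\oplus J')Q^{-1}$, which passes through $X$ at $z=\alpha$ and whose iterates, by the block computation of \Cref{prop:filled-Julia}, have diagonal entry $p^m(z)$. Normality of $\{p^m\}$ at $X$ would force normality of $\{p^m(z)\}$ on a disk around $\alpha$, contradicting $\alpha\in\J_p(\C)$. Equivalently, and more in the spirit of the paper's open-set approach, one can pick an open $W\subseteq\C$ with $\alpha\in W$ on which $\{p^m|_W\}$ fails to be normal, and use \Cref{lem:nbd-mat} to pull this non-normality back to an open subset of $\M_n(\C)$ which meets every neighborhood of $X$.

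The principal technical hurdle is the uniform resolvent bound in the sufficiency argument: one must choose the contour $\Gamma$ and the ball around $X$ so that the Neumann expansion
\begin{align*}
(\zeta I-Y)^{-1}=(\zeta I-X)^{-1}\sum_{j\ge 0}\bigl((\zeta I-X)^{-1}(Y-X)\bigr)^{j}
\end{align*}
converges uniformly for $\zeta\in\Gamma$ and $Y$ in that ball. This forces the ball to be shrunk relative to $M:=\sup_{\zeta\in\Gamma}\|(\zeta I-X)^{-1}\|$, which is finite only because $\Gamma\cap\sigma(X)=\emptyset$. Once this uniform estimate is secured, the rest is a matrix-valued adaptation of the classical Montel--Liouville argument, and the $\GL_n(\C)$ statement follows because the entire argument is local and $\GL_n(\C)$ is open in $\M_n(\C)$.
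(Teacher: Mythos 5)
Your ``sufficiency'' and ``necessity'' arguments are, step for step, the paper's proof of \Cref{thm:FJ-polynomial-map} (circles $C_i$ in the bounded Fatou components, Neumann-series resolvent bound, Cauchy integral formula for the functional calculus, \Cref{lem:normal-GK} plus Liouville; a Jordan-block construction for the converse). That theorem is already proved in the paper, and the corollary's second description $\J_p(\M_n(\C))=\{X:\sigma(X)\cap\J_p(\C)\neq\emptyset\}$ follows from it in one line by taking complements; so this part of your proposal is correct but amounts to re-proving the theorem rather than invoking it. (Your necessity argument, restricting to the holomorphic arc $z\mapsto Q(J_{z,k}\oplus J')Q^{-1}$ and reading off the diagonal entry $p^m(z)$, is actually cleaner than the paper's open-set argument.) Also, the ``$\GL_n(\C)$ statement'' you announce belongs to \Cref{thm:FJ-polynomial-map}, not to this corollary.

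The genuine gap is the first equality, $\J_p(\M_n(\C))=\partial \overline{K_p(\M_n(\C))}$, which your proposal never addresses. Establishing it requires the identification $\overline{K_p(\M_n(\C))}=\{g\in\M_n(\C):\sigma(g)\subseteq K_p(\C)\}$ --- recorded in the paper just before \Cref{lem:nbd-mat}, and resting on \Cref{prop:filled-Julia}(1) (semisimple matrices with spectrum in $K_p(\C)$ lie in $K_p(\M_n(\C))$ and are dense in that closed set) together with the fact that an eigenvalue outside $K_p(\C)$ forces $p^m(g)$ to diverge --- and then the classical fact $\J_p(\C)=\partial K_p(\C)$. This step is not a formality: $\partial\overline{K_p(\M_n(\C))}$ consists of matrices with \emph{all} eigenvalues in $K_p(\C)$ and at least one on $\J_p(\C)$, whereas the spectral description of $\J_p(\M_n(\C))$ also contains matrices having one eigenvalue on $\J_p(\C)$ and another in the basin of infinity (for instance $\mathrm{diag}(1,3)$ with $p(z)=z^2$). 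So for $n\geq 2$ the two right-hand sides of the corollary do not visibly coincide, and a complete proof must confront this discrepancy; your write-up gives no argument here at all.
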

\begin{corollary}\label{cor:no-wander}
    There exists no wandering Fatou component of the pair $(p,\M_n(\C))$ where $p\in\C[z]$ is a monic polynomial of degree $\geq 2$.
\end{corollary}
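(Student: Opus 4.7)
The plan is to deduce the corollary from Sullivan's no-wandering-domains theorem applied to the polynomial $p$ viewed as a rational map on the Riemann sphere, by showing that the Fatou components of $(p,\M_n(\C))$ are combinatorially indexed by the Fatou components of $(p,\C)$. By \Cref{thm:FJ-polynomial-map}, a matrix $X$ lies in $\F_p(\M_n(\C))$ if and only if $\sigma(X) \subseteq \F_p(\C)$, which makes this indexing possible.

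To each $X\in\F_p(\M_n(\C))$ I would associate the multiplicity vector $M(X)=(m_F)_F$, indexed by the Fatou components $F$ of $\C$, where $m_F$ is the total multiplicity of eigenvalues of $X$ lying in $F$; so $\sum_F m_F=n$ and only finitely many $m_F$ are non-zero. Continuous dependence of eigenvalues on $X$ combined with the openness and pairwise disjointness of the Fatou components of $\C$ makes $M$ locally constant on $\F_p(\M_n(\C))$, hence constant on every connected component $U$; write $M(U)$ for this common value. By the spectral mapping theorem $\sigma(p(X))=p(\sigma(X))$, and since $p$ sends each Fatou component of $\C$ into another, $M(p(X))$ is obtained from $M(X)$ by replacing $F$ with $p(F)$ and combining multiplicities when distinct components collide. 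Thus the assignment $U\mapsto M(U)$ intertwines the dynamics.

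The key technical step is injectivity of $U\mapsto M(U)$, i.e., that for every admissible multiset $M$ the locus
\begin{align*}
W_M=\{X\in\M_n(\C):M(X)=M\}
\end{align*}
is connected (it is open by the same continuity argument). Given $X_1,X_2\in W_M$, I would use Schur decompositions $X_i=Q_iT_iQ_i^{-1}$ with $Q_i$ unitary and $T_i$ upper triangular, choosing the ordering of eigenvalues on each diagonal so that both $T_1,T_2$ realise the same fixed assignment of slots to Fatou components of $\C$ compatible with $M$. Path-connectedness of the unitary group then connects $X_1$ to $Q_2T_1Q_2^{-1}$ via a conjugation path (which preserves the spectrum, hence $M$). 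Next I would connect $Q_2T_1Q_2^{-1}$ to $X_2=Q_2T_2Q_2^{-1}$ by sliding each diagonal entry of $T$ along a path inside its prescribed (path-connected) Fatou component of $\C$ and linearly interpolating the strict upper triangular entries; since the intermediate upper triangular matrix has its diagonal as its spectrum, the path stays in $W_M$.

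Once injectivity is established, Sullivan's theorem supplies integers $N\geq 0$ and $k\geq 1$ with $p^{N+k}(F)=p^N(F)$ for every Fatou component $F$ appearing in $M(U)$ (the support of $M(U)$ being finite, the parameters may be chosen uniformly by taking a maximum and a least common multiple). Hence the multisets $M(p^N(U))$ and $M(p^{N+k}(U))$ coincide, and by injectivity the two Fatou components $p^N(U)$ and $p^{N+k}(U)$ of $\M_n(\C)$ are equal, contradicting wanderingness. The main obstacle I expect is precisely the connectedness of $W_M$: one needs to move eigenvalues across Fatou components of $\C$ and reshuffle Jordan structure simultaneously without ever leaving $\F_p(\M_n(\C))$, which is what the Schur-plus-path construction is designed to achieve.
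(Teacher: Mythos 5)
Your proposal is correct and follows the same overall strategy as the paper: transport the question to the eigenvalues via \Cref{thm:FJ-polynomial-map}, then invoke Sullivan's no-wandering-domains theorem for $(p,\C)$. The difference lies in the middle step. The paper constructs continuous eigenvalue functions $\lambda_i$ on a Fatou component $C$, observes $\lambda_i(C)\subseteq C_i$ for Fatou components $C_i$ of $\F_p(\C)$, finds $m$ with $p^m(C_i)\subseteq C_i$, and then concludes $p^m(C)\subseteq C$. That last inference tacitly uses exactly what you isolate as the key technical point: a Fatou component of $(p,\M_n(\C))$ must be completely determined by the multiplicity vector recording which components of $\F_p(\C)$ contain its eigenvalues, i.e.\ your locus $W_M$ must be connected (so that the components of $\F_p(\M_n(\C))$ are precisely the nonempty $W_M$). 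Your Schur-decomposition argument --- a unitary path to align the conjugating matrices, followed by sliding the diagonal entries within their path-connected Fatou components while linearly interpolating the strictly upper-triangular part --- supplies an actual proof of this connectedness, so your write-up is more complete than the paper's at the one place where the paper is terse (the paper instead records only the weaker fact that $C$ is closed under $\GL_n(\C)$-conjugation). The remaining bookkeeping --- the pushforward of multiplicity vectors under $p$ via the spectral mapping theorem, and the uniform choice of $N$ and $k$ over the finite support of $M(U)$ --- is sound, and correctly covers the unbounded component (basin of infinity), which is fixed by $p$.
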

\begin{proof}
    Let $C$ be a Fatou component of the pair $(p,\M_n(\C))$.
    To prove our claim, we must show that there exists $m$, a positive integer, such that $p^m(C)\subseteq C$.

    Let $X\in C$, since the centralizer $\mathscr Z_{\GL_n(\C)}(X)\neq\emptyset$, and $\{gYg^{-1}:Y\in C,g\in\GL_n(\C)\}$ is connected we must have that $gXg^{-1}\in C$ for all $g\in \GL_n(\C)$.
The map
\begin{align*}
\chi : \M_n(\C) &\longrightarrow \C^{n}, \\
A &\longmapsto (a_1(A),a_2(A),\dots,a_n(A)),
\end{align*}
assigning to each matrix $A$ the vector of coefficients of its characteristic polynomial
\begin{align*}
\det(tI - A) = t^n + a_1(A)t^{n-1} + \cdots + a_n(A),
\end{align*}
is a polynomial map in the entries of $A$, and hence continuous.
Moreover, it is well known that the roots of a complex polynomial depend continuously on its coefficients (see, for instance, \cite{HarrisMartin1987}).
Hence, we can construct 
\begin{align*}
    \lambda_1:C\longrightarrow\C,\cdots,\lambda_n:C\longrightarrow \C,
\end{align*}
such that $\sigma(X)=\{\lambda_i(X):1\leq i\leq n\}$.
From \Cref{thm:FJ-polynomial-map}, it follows that an element lies in $\F_p(\M_n(\C))$ if and only if all eigenvalues lie in $\F_p(\C)$.
Hence for all $1\leq i\leq n$, one has that $\lambda_i(C)\subseteq C_i$, where $C_i$ is a Fatou component of $\F_p(\C)$.
Since there exists no Fatou component of the pair $(p,\C)$, one must have the existence of $m$ such that $p^m(C_i)\subseteq C_i$ for all $1\leq i\leq n$.
Hence $p^m(C)\subseteq C$.
\end{proof}
\printbibliography
\vspace{2em}
\end{document}